\definecolor{refkey}{rgb}{0,1,1}
\definecolor{labelkey}{rgb}{1,0,0}
\newtheorem{theorem}{Theorem}
\newtheorem{lemma}[theorem]{Lemma}
\theoremstyle{definition}
\newtheorem{cor}{Corollary}
\theoremstyle{remark}
\numberwithin{equation}{section}
\newcommand{\abs}[1]{\lvert#1\rvert}
\newcommand{\norm}[1]{\left\Vert#1\right\Vert}
\newcommand{\scal}[1]{\langle#1\rangle}
\newcommand{\conv}{\operatorname{conv}}
\newcommand{\re}{\operatorname{Re}}
\newcommand{\eq} [1] {\begin{equation}\label{#1}\quad}
\newcommand{\en} {\end{equation}}
\begin{document}

\title[Anderson's theorem revisited]{A note on Anderson's theorem \\
in the infinite-dimensional setting}

\author[Birbonshi]{Riddhick Birbonshi}
\address{Department of Mathematics,
Indian Institute of Technology Kharagpur,
Kharagpur 721302, India}
\email{riddhick.math@gmail.com}

\author[Spitkovsky]{Ilya M. Spitkovsky}
\address{Division of Science,  New York  University Abu Dhabi (NYUAD), Saadiyat Island,
P.O. Box 129188 Abu Dhabi, UAE}
\email{ims2@nyu.edu, imspitkovsky@gmail.com}
\thanks{Supported in part by Faculty Research funding from the Division of Science and Mathematics, New York University Abu Dhabi.}

\author[Srivastava]{P. D. Srivastava}
\address{Department of Mathematics,
Indian Institute of Technology Kharagpur,
Kharagpur 721302, India}
\email{pds@maths.iitkgp.ernet.in}
\subjclass[2010]{Primary 47A12; Secondary 47B07, 47B15, 47B37}

\date{}


\keywords{Numerical range, Normal operator, compact operator, Weighted shift}

\begin{abstract}Anderson's theorem states that if the numerical range $W(A)$ of an $n$-by-$n$ matrix $A$ is contained in the unit disk $\overline{\mathbb D}$ and intersects with the unit circle at more than $n$ points, then $W(A)=\overline{\mathbb D}$. An analogue of this result for compact $A$ in an infinite dimensional setting was established by Gau and Wu. We consider here the case of $A$ being the sum of a normal and compact operator.
\end{abstract}

\maketitle

\section{Introduction}

The {\em numerical range} (also known as the {\em field of values}, or the {\em Hausdorff set}) of a bounded linear operator $A$
acting on a Hilbert space $\mathcal H$ is defined as
\[ W(A)=\{\scal{Ax,x}\colon \norm{x}=1\}. \]
Here $\scal{.,.}$ and $\norm{.}$ stand for the scalar product on $\mathcal H$ and the norm generated by it, respectively.

The set $W(A)$ is a convex (Toeplitz-Hausdorff theorem), bounded, and in the case $\dim{\mathcal H}<\infty$ also closed subset of the complex plane $\mathbb C$.

We will use the standard notation $\overline{X}, X^o, \partial X, X'$ for the closure, interior, the boundary, and the set of the limit points, respectively, of subsets $X\subset\mathbb C$.
In particular, ${\mathbb D}=\{ z\colon \abs{z}<1\}$ is the open unit disk, $\partial{\mathbb D}=\mathbb T$ is the unit circle, and $\overline{\mathbb D}={\mathbb D}\cup\partial\mathbb D$
is the closed unit disk.

The closure $\overline{W(A)}$ of the numerical range of $A$ contains the spectrum $\sigma(A)$, and thus the convex hull $\conv\sigma(A)$ of the latter. For normal $A$, $\overline{W(A)}=\conv\sigma(A)$. We refer to \cite{GusRa} for these and other well known properties of the numerical range.

Anderson's theorem (unpublished by the author but discussed e.g. in \cite{GauWu03,RadHad})
states that if $W(A)$ is contained in $\overline{\mathbb D}$ and the intersection of $W(A)$ with $\mathbb T$ consists of more than $n=\dim\mathcal H$ points,
then in fact $W(A)=\overline{\mathbb D}$. This result is sharp in a sense that for a unitary operator $U$ with a simple spectrum acting on an $n$-dimensional $\mathcal H$, $W(U)$ is a polygon with $n$ vertices on $\mathbb T$
and thus different from $\overline{\mathbb D}$.

Unitary diagonal operators also deliver easy examples showing that Anderson's theorem does not generalize to the infinite-dimensional setting. Indeed, if $A$ is a diagonal operator with the point spectrum
$\sigma_p(U)=\{\lambda_j, \ j=1,2,\ldots\}\subset\mathbb T$, then $\overline{W(A)}=\conv\sigma_p(A)\varsubsetneq\overline{\mathbb D}$ while $W(A)\cap{\mathbb T}=\sigma_p(A)$ is infinite.

Moreover, according to \cite{RadRad} every bounded convex set $G$ for which $G\setminus G^o$ is the union of countably many singletons and conic arcs is the numerical range of some operator acting on a separable $\mathcal H$.

On the positive side, Anderson's theorem generalizes quite naturally to the infinite dimensional case under some restrictions on the operators involved. As was shown more recently in \cite{GauWu06},
the following result holds:
\begin{theorem} \label{th:GauWu}If $A$ is a compact operator on a Hilbert space with $W(A)$ contained in $\overline{\mathbb D}$ and $\overline{W(A)}$ intersecting $\mathbb T$ at infinitely many points, then
$W(A)=\overline{\mathbb D}$. \end{theorem}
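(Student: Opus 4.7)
The plan is to apply the finite-dimensional Anderson theorem to suitable compressions of $A$. For any finite-dimensional subspace $M\subseteq\mathcal H$ with orthogonal projection $P_M$, the compression $A_M:=P_M A P_M|_M$ satisfies $W(A_M)\subseteq W(A)\subseteq\overline{\mathbb D}$, since $P_Mx=x$ for $x\in M$. So it suffices to exhibit such an $M$ of dimension $n$ with $W(A_M)\cap\mathbb T$ containing strictly more than $n$ points; the finite-dimensional Anderson theorem then forces $W(A_M)=\overline{\mathbb D}$, hence $\overline{\mathbb D}\subseteq W(A)$, and combined with the hypothesis this gives $W(A)=\overline{\mathbb D}$.

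The first substantive step is to show that every $\lambda\in\overline{W(A)}\cap\mathbb T$ is actually attained in $W(A)$, on a finite-dimensional ``eigenspace''. The compact self-adjoint operator $H_\lambda:=\tfrac12(\bar\lambda A+\lambda A^*)$ has $W(H_\lambda)=\re(\bar\lambda W(A))\subseteq[-1,1]$ and satisfies $1\in\overline{W(H_\lambda)}$, so $1=\norm{H_\lambda}$ is its largest eigenvalue, attained on a finite-dimensional subspace $E_\lambda:=\ker(H_\lambda-I)$. For any unit $x\in E_\lambda$, the identity $\re\scal{\bar\lambda Ax,x}=1$ together with $\abs{\scal{\bar\lambda Ax,x}}\le 1$ forces $\scal{Ax,x}=\lambda$.

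Using compactness of $\mathbb T$, one extracts a limit point $\lambda_0\in\mathbb T\cap\overline{W(A)}$ and a sequence of distinct $\lambda_j\in\overline{W(A)}\cap\mathbb T$ with $\lambda_j\to\lambda_0$; choose unit vectors $x_j\in E_{\lambda_j}$. For each $N$, setting $M_N:=\operatorname{span}(x_1,\ldots,x_N)$ yields a compression whose numerical range contains the $N$ distinct points $\lambda_1,\ldots,\lambda_N$ of $\mathbb T$. The main obstacle is securing $\dim M_N<N$ for some $N$, since only then does the finite-dimensional Anderson theorem apply directly.

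To overcome this, one exploits that $\lambda\mapsto H_\lambda$ is norm-continuous and that $1$ is an isolated point of $\sigma(H_{\lambda_0})$ (the operator being compact self-adjoint): the spectral projections of $H_{\lambda_j}$ onto the $1$-eigenspace converge in operator norm to the projection onto the finite-dimensional $E_{\lambda_0}$, so for large $j$ the vectors $x_j$ cluster arbitrarily close to the finite-dimensional subspace $E_{\lambda_0}$. A careful perturbation or limit-passage argument then upgrades this near-dependence to an exact linear relation among sufficiently many $x_j$'s, producing the required strict inequality; alternatively one may enlarge $M_N$ by a vector from $E_{\lambda_0}$ and argue that the enlarged compression must contain additional $\mathbb T$-points beyond $\lambda_0,\lambda_1,\ldots,\lambda_N$ forced by the geometry of the approximate eigenvectors. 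Making either of these approaches fully rigorous is the technical crux of the proof.
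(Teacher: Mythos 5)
Your preparatory steps are correct and are worth keeping: since $A$ is compact, for each $\lambda\in\overline{W(A)}\cap\mathbb T$ the operator $H_\lambda=\re(\bar\lambda A)$ is compact self-adjoint with $\max\sigma(H_\lambda)=\sup W(H_\lambda)=1$, so $1$ is an isolated eigenvalue of finite multiplicity and every unit vector of $E_\lambda=\ker(H_\lambda-I)$ attains $\lambda$; in particular $\overline{W(A)}\cap\mathbb T\subset W(A)$. The norm continuity of $\lambda\mapsto H_\lambda$ and the resulting convergence of the Riesz projections, giving $\operatorname{dist}(x_j,E_{\lambda_0})\to 0$, are also fine.

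The last step, however, is a genuine gap, and neither of the two fixes you sketch can close it. Approximate membership in a finite-dimensional subspace does not upgrade to an exact linear relation: take $x_j=\sqrt{1-\epsilon_j^2}\,e+\epsilon_j f_j$ with $\{f_j\}$ orthonormal, orthogonal to $e$, and $\epsilon_j\to 0$; then $\operatorname{dist}(x_j,\operatorname{span}\{e\})\to 0$ while every finite subfamily is linearly independent. The alternative of adjoining $E_{\lambda_0}$ fails on dimension count: with $m=\dim E_{\lambda_0}\ge 1$, the space $E_{\lambda_0}+\operatorname{span}(x_{j_1},\dots,x_{j_k})$ may have dimension $m+k$ while you can only certify $k+1$ points of $\mathbb T$ in the numerical range of the compression, and $k+1>m+k$ never holds; nothing forces extra circle points, since the compression of $A$ to $E_{\lambda_0}$ is just $\lambda_0 I$ and a small bordering of $\lambda_0 I$ need not meet $\mathbb T$ beyond the points already known. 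Each new touch point may thus cost a new dimension, which is exactly why the finite-dimensional Anderson theorem never becomes applicable. The missing ingredient is a local structural result converting accumulation of touch points into an entire arc of $\mathbb T$ contained in $W(A)$. Both Gau--Wu and the present paper get this from Narcowich's theorem: near a point $z$ where $1$ is an isolated finite-multiplicity eigenvalue of $\re(z^{-1}A)$, the boundary of $W(A)$ is covered by two analytic arcs lying in $W(A)$; at a limit point of the touch set one of these arcs meets $\mathbb T$ infinitely often and hence, being analytic, locally coincides with $\mathbb T$. An open--closed argument then yields $\mathbb T\subset W(A)$, and convexity finishes the proof. Without an ingredient of this analytic type, the compression strategy stalls at the point you yourself identify as the crux.
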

In this paper, we single out a wider class of operators for which analogs of Anderson's theorem are valid in an infinite dimensional setting.
\section{Main results}\label{s:main}

We start with a lemma.
\begin{lemma} \label{l:gen}Let $A=N+K$, where $N$ is normal and $K$ is a compact operator on a Hilbert space $\mathcal H$. If $W(A)\subset\overline{\mathbb D}$ and $\gamma$ is a closed arc of $\mathbb T$ such that the intersection $\gamma\cap\overline{W(A)}$ is infinite while $\gamma\cap\sigma_{ess}(A)=\emptyset$, then
$\gamma\subset W(A)$. \end{lemma}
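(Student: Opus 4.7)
The plan is to study the self-adjoint operators $H_\zeta := \tfrac12(\bar\zeta A + \zeta A^*)$ for $\zeta \in \mathbb T$ and to show, via real-analytic perturbation of their top spectral data, that $f(\zeta) := \max\sigma(H_\zeta)$ equals $1$ identically on $\gamma$. Since $K$ is compact, $\sigma_{ess}(A) = \sigma_{ess}(N)$; because $H_\zeta - \re(\bar\zeta N) = \re(\bar\zeta K)$ is compact, the spectral theorem for the normal operator $N$ gives $\sigma_{ess}(H_\zeta) = \{\re(\bar\zeta z) : z \in \sigma_{ess}(N)\}$. The hypothesis $W(A)\subset\overline{\mathbb D}$ is equivalent to $H_\zeta\le I$ for every $\zeta\in\mathbb T$, so $f\le 1$. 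For $\abs{z}\le 1$ the equation $\re(\bar\zeta z) = 1$ forces $z = \zeta$; combined with $\gamma\cap\sigma_{ess}(N) = \emptyset$ and compactness of $\gamma$, this yields a constant $c_0 < 1$ and an open arc $\gamma'\supset\gamma$ of $\mathbb T$ with $\max\sigma_{ess}(H_\zeta) < c_0$ for every $\zeta\in\gamma'$.

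I would then reduce to a purely spectral question via two equivalences. (a) $\zeta\in\overline{W(A)}$ iff $f(\zeta) = 1$: if $\scal{Ax_n,x_n}\to\zeta$, then $\scal{H_\zeta x_n,x_n}\to 1$; conversely, if $\scal{H_\zeta x_n,x_n}\to 1$, then $\re(\bar\zeta\scal{Ax_n,x_n})\to 1$ together with $\abs{\scal{Ax_n,x_n}}\le 1$ force $\scal{Ax_n,x_n}\to\zeta$. (b) For $\zeta\in\gamma'$, $f(\zeta) = 1$ implies $\zeta\in W(A)$: since $1>c_0\ge\max\sigma_{ess}(H_\zeta)$, the value $1$ is an isolated eigenvalue of $H_\zeta$ of finite multiplicity, and any unit eigenvector $x$ satisfies $\scal{H_\zeta x,x} = 1$, which with $\abs{\scal{Ax,x}}\le 1$ forces $\scal{Ax,x} = \zeta$. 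So it suffices to prove $\gamma\subset\Omega$, where $\Omega := \{\zeta\in\gamma' : f(\zeta) = 1\}$, closed in $\gamma'$ by continuity of $f$.

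The main step is real-analytic continuation. On a small neighbourhood $N_0$ of any $\zeta_0\in\gamma'$, Kato--Rellich perturbation theory applied to the real-analytic family $\theta\mapsto H_{e^{i\theta}}$ produces finitely many real-analytic branches $\lambda_1(\zeta),\dots,\lambda_m(\zeta)$ tracking the eigenvalues of $H_\zeta$ in $(c_0,1]$; the needed analyticity is supplied by the finite-rank spectral projection $F_\zeta := \chi_{(c_0,\infty)}(H_\zeta)$, real-analytic in $\zeta$ by a Dunford contour integral with contour in the uniform resolvent gap around $c_0$. Each $\lambda_j\le 1$, so $\Omega\cap N_0 = \bigcup_j\{\lambda_j = 1\}\cap N_0$. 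If $\zeta_0$ is a limit point of $\Omega$, the pigeonhole principle forces some $\{\lambda_j = 1\}$ to accumulate at $\zeta_0$, so $\lambda_j\equiv 1$ on $N_0$ by real-analyticity, whence $f\equiv 1$ on $N_0$. Therefore the derived set $\Omega'$ is open; it is also closed (being contained in the closed set $\Omega$) and, by the hypothesis together with the compactness of $\gamma$ and equivalence (a), nonempty. Since $\gamma'$ is connected, $\Omega' = \gamma'$, so $\gamma\subset\gamma' = \Omega'\subseteq\Omega$, and equivalence (b) yields $\gamma\subset W(A)$.

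The main obstacle is the verification of the uniform spectral gap at $c_0$ and the consequent real-analyticity of $F_\zeta$ on $\gamma'$; once these are in place, the remainder is a standard connected-clopen argument in the spirit of analytic continuation.
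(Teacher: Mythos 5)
Your argument is correct in substance and reaches the conclusion by a route that is structurally parallel to, but more self-contained than, the paper's. The paper works with the same object (the support function $d_A(\theta)=\sup W(\re(e^{-i\theta}A))$, which is your $f$), computes $\sigma_{ess}(\re(e^{-i\theta}A))=\re(e^{-i\theta}\sigma_{ess}(A))$ via Berberian's spectral mapping theorem (you get the same identity from the spectral theorem for $N$ plus Weyl invariance), and then runs a clopen/connectedness argument on the set where the support function equals $1$. The difference is the source of analyticity: the paper cites Narcowich's Theorem 3.3, which packages the perturbation theory into a statement about two analytic arcs covering $\partial W(A)$ near a point where the top eigenvalue of $\re(\bar\zeta A)$ is isolated of finite multiplicity, whereas you re-derive exactly what is needed by applying Kato--Rellich directly to the real-analytic family $\theta\mapsto H_{e^{i\theta}}$ and using the identity theorem on the finitely many top eigenvalue branches. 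Your version buys independence from Narcowich's result and makes explicit the step (isolated top eigenvalue of finite multiplicity is attained, hence $\zeta\in W(A)$, not merely $\overline{W(A)}$) that the paper absorbs into the inclusion $\gamma_1(z)\cup\gamma_2(z)\subset W(A)$; the paper's version is shorter and also yields boundary-structure information near the arc.

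One point needs repair, and you flagged it yourself: there is in general no single level $c_0$ lying in a \emph{uniform} resolvent gap of $H_\zeta$ for all $\zeta\in\gamma'$, since eigenvalues of $H_\zeta$ may sit at or cross any fixed level as $\zeta$ varies; what is uniform is only the bound $\max\sigma_{ess}(H_\zeta)<c_0$. The fix is to localize: your continuation argument only needs to show that $\Omega'$ is open, so for each fixed $\zeta_0\in\gamma'$ choose $c\in(c_0,1)$ with $c\notin\sigma(H_{\zeta_0})$ (possible because $\sigma(H_{\zeta_0})\cap[c_0,\infty)$ is finite), note that $\operatorname{dist}\bigl(c,\sigma(H_\zeta)\bigr)$ stays bounded below on a neighbourhood $N_0$ of $\zeta_0$ since $\|H_\zeta-H_{\zeta_0}\|\le|\zeta-\zeta_0|\,\|A\|$, and take the Dunford contour through $c$ there. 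With that adjustment the finite-rank projection $F_\zeta$, the branches $\lambda_j$, and the rest of your argument go through verbatim.
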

Recall that the essential spectrum $\sigma_{ess}(A)$ of an operator $A$ is the set of $\lambda\in\mathbb C$ such that the operator $A-\lambda I$ is not Fredholm. Equivalently, $\sigma_{ess}(A)$ is the spectrum of the
equivalence class of $A$ in the Calkin algebra of the algebra of bounded linear operators by the ideal of compact operators.

The proof of this lemma is delegated to the next section; we will discuss here some of its consequences.
\begin{theorem}\label{th:Gamma} Let $A=N+K$, where $N$ is normal and $K$ is a compact operator on a Hilbert space $\mathcal H$. Let also $W(A)\subset\overline{\mathbb D}$ and $\Gamma$ be a (relatively) open subset of $\mathbb T$ disjoint with $\sigma_{ess}(A)$. If every connected component of $\Gamma$ contains limit points of its intersection with $\overline{W(A)}$, then $\Gamma\subset W(A)$.  \end{theorem}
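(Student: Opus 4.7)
The plan is to reduce Theorem~\ref{th:Gamma} to Lemma~\ref{l:gen} by a purely topological argument: for every $z\in\Gamma$ I produce a closed arc $\gamma\subset\mathbb T$ containing $z$ which satisfies the hypotheses of the lemma (disjointness from $\sigma_{ess}(A)$ and infinite intersection with $\overline{W(A)}$). The lemma then gives $\gamma\subset W(A)$, and in particular $z\in W(A)$. Since $z\in\Gamma$ was arbitrary, this yields $\Gamma\subset W(A)$.

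Fix $z\in\Gamma$ and let $C$ be the connected component of $\Gamma$ containing $z$. Since $\Gamma$ is relatively open in $\mathbb T$, $C$ is a (relatively) open connected subset of $\mathbb T$, i.e.\ either an open arc or the whole circle $\mathbb T$. By hypothesis there exists a point $z_0\in C$ which is a limit point of $C\cap\overline{W(A)}$, so every neighborhood of $z_0$ in $\mathbb T$ contains infinitely many points of $\overline{W(A)}$.

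Now I choose a closed arc $\gamma\subset C$ whose interior (relative to $\mathbb T$) contains both $z$ and $z_0$. If $C=\mathbb T$ any closed proper sub-arc through $z$ and $z_0$ works; if $C$ is a proper open arc of $\mathbb T$ then its two endpoints lie at positive distance from $\{z,z_0\}$, so one can fatten a short closed arc around $z$ and $z_0$ inside $C$. By construction $\gamma\subset C\subset\Gamma$, and hence $\gamma\cap\sigma_{ess}(A)=\emptyset$. Moreover, since $z_0$ lies in the interior of $\gamma$ and is a limit point of $\overline{W(A)}$, the intersection $\gamma\cap\overline{W(A)}$ contains a tail of a sequence converging to $z_0$ and is therefore infinite. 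Applying Lemma~\ref{l:gen} to this $\gamma$ yields $\gamma\subset W(A)$, hence $z\in W(A)$.

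The only step requiring any care is the geometric construction of $\gamma$; the mild subtlety is when $z=z_0$ (take any small closed arc about $z$ inside $C$) or when $C=\mathbb T$ (take any closed proper sub-arc through the two points). Everything else is immediate from the lemma, which does all the analytic work. I do not foresee genuine obstacles here: the content of Theorem~\ref{th:Gamma} is essentially the observation that, under the given component-wise accumulation condition, every point of $\Gamma$ can be enclosed in a closed arc satisfying the hypotheses of Lemma~\ref{l:gen}.
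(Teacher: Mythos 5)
Your proposal is correct and follows essentially the same route as the paper: both reduce the theorem to Lemma~\ref{l:gen} by covering each connected component of $\Gamma$ with closed sub-arcs that contain a limit point of the component's intersection with $\overline{W(A)}$ (the paper uses an expanding exhaustion of each component by closed arcs, you choose one arc per point, which amounts to the same thing). Your explicit treatment of the case where a component is all of $\mathbb T$ is a small point of extra care, but not a genuinely different argument.
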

{\em Proof.} Connected components of $\Gamma$ are open arcs $\Gamma_j$. Writing $\Gamma_j$ as $\bigcup_{k=1}^\infty\gamma_{jk}$, where \[ \gamma_{j1}\subset\gamma_{j2}\subset\cdots\subset\gamma_{jk}\subset\cdots \] is an expanding family of closed arcs, we see that $\gamma=\gamma_{jk}$ satisfy the conditions of Lemma~\ref{l:gen} and thus $\gamma_{jk}\subset W(A)$, for $k$ large enough. Consequently, \[ \pushQED{\qed} \Gamma=\cup_{j,k=1}^\infty\gamma_{jk}\subset W(A).\qedhere \popQED \]
\begin{cor}\label{co:dense} Let $A$ and $\Gamma$ satisfy the conditions of Theorem~\ref{th:Gamma}, and in addition $\Gamma$ is dense in $\mathbb T$. Then \eq{incl}
{\mathbb D}\cup\Gamma\subset W(A)\subset \overline{W(A)}=\overline{\mathbb D}.\en \end{cor}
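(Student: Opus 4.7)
The hypotheses together with Theorem~\ref{th:Gamma} already give $\Gamma \subset W(A)$, and the assumption $W(A) \subset \overline{\mathbb D}$ is inherited directly. So the actual work is to verify the two remaining assertions: $\overline{W(A)} = \overline{\mathbb D}$ and $\mathbb D \subset W(A)$.

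First I would establish $\overline{W(A)} = \overline{\mathbb D}$. From $\Gamma \subset W(A)$ together with the density of $\Gamma$ in $\mathbb T$, one obtains $\mathbb T \subset \overline\Gamma \subset \overline{W(A)}$. Since $W(A)$ is convex by the Toeplitz--Hausdorff theorem, so is its closure. Because $\overline{\mathbb D} = \conv\mathbb T$ is the smallest closed convex set containing $\mathbb T$, this forces $\overline{\mathbb D} \subset \overline{W(A)}$, which, combined with $\overline{W(A)} \subset \overline{\mathbb D}$ coming from the other hypothesis, yields equality.

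For the inclusion $\mathbb D \subset W(A)$ I would invoke the standard fact that for any convex subset $C$ of a finite-dimensional real vector space one has $(\overline{C})^o \subset C$ (indeed, $(\overline{C})^o$ and $C^o$ coincide whenever either is nonempty). Applying this to the convex set $W(A)$, and using the equality $\overline{W(A)} = \overline{\mathbb D}$ just obtained, gives $\mathbb D = (\overline{\mathbb D})^o = (\overline{W(A)})^o \subset W(A)$, as required.

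No step poses a genuine obstacle: this is a short consequence of Theorem~\ref{th:Gamma} together with two standard convexity facts, namely the Toeplitz--Hausdorff convexity of $W(A)$ and the coincidence of the interiors of a convex set and of its closure. The only point deserving attention is that the passage $\mathbb D \subset W(A)$ uses the interior of the \emph{closure}, so the convexity fact has to be applied at that level rather than directly to $W(A)$.
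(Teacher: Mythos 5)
Your proof is correct, but it reverses the order of the paper's argument and leans on a slightly different convexity fact. The paper first shows $\mathbb D\subset W(A)$ directly: from $\Gamma\subset W(A)$ and Toeplitz--Hausdorff convexity it gets $\conv\Gamma\subset W(A)$, and the elementary geometric observation that the convex hull of a dense subset of $\mathbb T$ contains $\mathbb D$ finishes the left inclusion; the equality $\overline{W(A)}=\overline{\mathbb D}$ is then read off from $\mathbb D\subset W(A)\subset\overline{\mathbb D}$. You instead establish $\overline{W(A)}=\overline{\mathbb D}$ first (via $\mathbb T\subset\overline\Gamma\subset\overline{W(A)}$ and $\conv\mathbb T=\overline{\mathbb D}$) and then recover $\mathbb D\subset W(A)$ from the standard fact that for a convex set $C$ in $\mathbb R^2$ one has $\bigl(\overline{C}\bigr)^o\subset C$. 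That fact is true and your use of it is sound (the closure having nonempty interior forces $C$ to be full-dimensional, so $\bigl(\overline{C}\bigr)^o=C^o$), but it is a somewhat heavier tool than the paper needs; the paper's route gets by with only the convex hull of a dense subset of the circle. Both arguments are short and correct, so the difference is one of packaging rather than substance.
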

\begin{proof}By Theorem~\ref{th:Gamma} we have $\Gamma\subset W(A)$, and so $\conv\Gamma\subset W(A)$ due to the convexity of the numerical range. But $\Gamma$ being dense in $\mathbb T$ implies that
$\conv\Gamma\supset\mathbb D$. This proves the left inclusion in \eqref{incl}. The right equality then follows by combining ${\mathbb D}\subset W(A)$ with the given $W(A)\subset\overline{\mathbb D}$. \end{proof}
If the normal component $N$ of $A$ is in fact hermitian, then $\sigma_{ess}(A)\subset\mathbb R$. Choosing $\Gamma={\mathbb T}\setminus\{1,-1\}$ immediately yields
\begin{cor}\label{co:her}Let $A=H+K$, where $H$ is hermitian and $K$ is a compact operator on a Hilbert space $\mathcal H$. If $W(A)\subset\overline{\mathbb D}$ and the set $\overline{W(A)}\cap\mathbb T$ has limit points
both in the upper and lower open half plane, then $\overline{W(A)}=\overline{\mathbb D}$ and $\overline{W(A)}\setminus W(A)\subset\{1,-1\}$. \end{cor}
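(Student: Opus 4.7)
The plan is to reduce the statement directly to Corollary~\ref{co:dense} applied with $\Gamma = \mathbb{T}\setminus\{1,-1\}$. To do this I need to verify two things: that this $\Gamma$ is disjoint from $\sigma_{ess}(A)$, and that each of its connected components contains limit points of its intersection with $\overline{W(A)}$.

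For the first condition, I would use that the essential spectrum is invariant under compact perturbations, since $\sigma_{ess}(A)$ is intrinsic to the image of $A$ in the Calkin algebra (as recalled in the paper after Lemma~\ref{l:gen}). Hence $\sigma_{ess}(A)=\sigma_{ess}(H)\subset\sigma(H)\subset\mathbb{R}$ because $H$ is hermitian. Intersecting with $\mathbb{T}$ gives $\sigma_{ess}(A)\cap\mathbb{T}\subset\{1,-1\}$, so $\Gamma=\mathbb{T}\setminus\{1,-1\}$ avoids $\sigma_{ess}(A)$ as required by Theorem~\ref{th:Gamma}.

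For the second condition, note that $\Gamma$ has exactly two connected components, namely the open upper and lower semicircles $\Gamma_+$ and $\Gamma_-$. By hypothesis there is a limit point $p_+$ of $\overline{W(A)}\cap\mathbb{T}$ lying in the open upper half plane, so $p_+\in\Gamma_+$. Since $\Gamma_+$ is open in $\mathbb{T}$, any sequence of points of $\overline{W(A)}\cap\mathbb{T}$ converging to $p_+$ must eventually lie in $\Gamma_+$, and therefore $p_+$ is a limit point of $\overline{W(A)}\cap\Gamma_+$. The analogous argument applies to $\Gamma_-$ using a limit point in the open lower half plane.

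With both hypotheses verified, Corollary~\ref{co:dense} (which applies because $\Gamma$ is dense in $\mathbb{T}$) gives $\mathbb{D}\cup\Gamma\subset W(A)\subset\overline{W(A)}=\overline{\mathbb{D}}$. The second assertion follows immediately: $\overline{W(A)}\setminus W(A)\subset\overline{\mathbb{D}}\setminus(\mathbb{D}\cup\Gamma)=\{1,-1\}$. There is no real obstacle here; the only step deserving explicit mention is the invariance of the essential spectrum under compact perturbations, but this is standard and is precisely what the remark following Lemma~\ref{l:gen} is pointing to.
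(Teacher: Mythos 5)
Your proof is correct and follows exactly the route the paper takes: the paper's entire argument for this corollary is the one-line observation that $H$ hermitian forces $\sigma_{ess}(A)\subset\mathbb{R}$, after which one applies Corollary~\ref{co:dense} with $\Gamma=\mathbb{T}\setminus\{1,-1\}$. You have merely spelled out the routine verifications (the two components of $\Gamma$ and the limit-point condition) that the paper leaves implicit.
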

The next statement also is an immediate consequence of Corollary~\ref{co:dense}; we nevertheless state it as a theorem.
\begin{theorem}\label{th:cir}Let $A=N+K$, where $N$ is normal and $K$ is a compact operator on a Hilbert space $\mathcal H$. If $W(A)\subset\overline{\mathbb D}$ and the intersection ${\mathbb T}\cap\overline{W(A)}$ is infinite while $\sigma_{ess}(A)\subset\mathbb D$, then $W(A)=\overline{\mathbb D}$. \end{theorem}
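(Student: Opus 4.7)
The plan is to deduce this essentially directly from Corollary~\ref{co:dense} by taking $\Gamma=\mathbb T$. So the first task is to check that this choice of $\Gamma$ meets every hypothesis of that corollary.

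First, $\Gamma=\mathbb T$ is (vacuously) a relatively open subset of $\mathbb T$, and it is dense in $\mathbb T$. Since we are given $\sigma_{ess}(A)\subset\mathbb D$, and $\mathbb D$ is the \emph{open} unit disk, we have $\mathbb T\cap\sigma_{ess}(A)=\emptyset$, so the disjointness hypothesis $\Gamma\cap\sigma_{ess}(A)=\emptyset$ is satisfied. The circle $\mathbb T$ is connected, so it has only one connected component, namely itself; the condition that every component of $\Gamma$ contain a limit point of $\Gamma\cap\overline{W(A)}$ therefore reduces to the assertion that $\mathbb T\cap\overline{W(A)}$ has a limit point lying in $\mathbb T$. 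But by assumption this intersection is infinite, and it sits inside the compact set $\mathbb T$, so it must accumulate at some point of $\mathbb T$.

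With all hypotheses of Corollary~\ref{co:dense} verified, we conclude that
\[ \mathbb D\cup\mathbb T\;\subset\;W(A)\;\subset\;\overline{W(A)}\;=\;\overline{\mathbb D}. \]
Since $\mathbb D\cup\mathbb T=\overline{\mathbb D}$, this forces $W(A)=\overline{\mathbb D}$, as desired.

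In other words, once Lemma~\ref{l:gen} (and hence Theorem~\ref{th:Gamma} and Corollary~\ref{co:dense}) is in hand, Theorem~\ref{th:cir} is a formal consequence and presents no genuine obstacle; all of the real content has been pushed into the lemma, whose proof, the authors indicate, is postponed to the next section. The only thing to be careful about is the distinction between $\mathbb D$ (open) and $\overline{\mathbb D}$ (closed) in the hypothesis $\sigma_{ess}(A)\subset\mathbb D$: it is precisely this strict containment that guarantees $\mathbb T$ itself is available as a legitimate choice of $\Gamma$.
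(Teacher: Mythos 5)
Your proof is correct and follows exactly the paper's own route: the authors likewise apply Corollary~\ref{co:dense} with $\Gamma=\mathbb T$ and observe that the inclusions in \eqref{incl} collapse to equalities. Your explicit verification of the hypotheses (compactness of $\mathbb T$ forcing a limit point, openness of $\mathbb D$ giving $\mathbb T\cap\sigma_{ess}(A)=\emptyset$) is just a more detailed write-up of the same one-line argument.
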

\begin{proof}Indeed, $A$ satisfies the conditions of Corollary~\ref{co:dense} with $\Gamma=\mathbb T$, and so the inclusions in \eqref{incl} turn into the equalities. \end{proof}

\section{Proof of Lemma~\ref{l:gen}}

Note that the essential spectrum is invariant under addition of compact summands, and so $\sigma_{ess}(A)=\sigma_{ess}(N)$. The latter coincides with $\sigma(N)$ from which the isolated eigenvalues of finite multiplicity were removed. If $A$ is compact, that is, $N=0$, then of course $\sigma_{ess}(A)=\{0\}$, and condition $\sigma_{ess}(A)\subset\mathbb D$ holds. So, Theorem~\ref{th:GauWu} is a particular case of Theorem~\ref{th:cir} which was derived in the previous section from Lemma~\ref{l:gen}. On the other hand, our proof of Lemma~\ref{l:gen} below follows the lines of Gau-Wu’s proof of Theorem~\ref{th:GauWu}.

Let $d_A(\theta)=\sup W\left(\re(e^{-i\theta}A)\right)$, $\theta\in\mathbb R$, where as usual $\re X$ denotes the hermitian part $(X+X^*)/2$ of the operator $X$.
Since $d_A$ is the support function of the convex set $\overline{W(A)}$, condition $W(A)\subset\overline{\mathbb D}$ is equivalent to \eq{dA1} d_A(\theta)\leq 1, \quad \theta\in\mathbb R,\en
while the condition imposed on $\gamma\cap\overline{W(A)}$ means that the set
\eq{dA2} \alpha:=\{ e^{i\theta}\in\gamma \colon d_A(\theta)=1\} \en
is infinite. Consequently, $\alpha'\neq\emptyset$.

Observe now that for operators $A$ of the form $N+K$ the essential spectrum coincides with their Weyl spectrum $\omega(A)$, that is, the set of $\lambda$ for which $A-\lambda I$ is not a Fredholm operator with index zero.
By Berberian's spectral mapping theorem \cite[Theorem 3.1]{Ber70}, for any normal operator $T$ and a function $f$ continuous on $\sigma(T)$, $\omega(f(T))=f(\omega(T))$. Since $e^{-i\theta}A=e^{-i\theta}N+e^{-i\theta}K$
is the sum of a normal and compact operator along with $A$, we have
\begin{multline*} \sigma_{ess}\left(\re(e^{-i\theta}A)\right)= \omega\left(\re(e^{-i\theta}A)\right)=\re\left(\omega(e^{-i\theta}A)\right) \\
=\re\left(\sigma_{ess}(e^{-i\theta}A)\right)=\re\left(e^{-i\theta}\sigma_{ess}(A)\right). \end{multline*}
So, the condition $\gamma\cap\sigma_{ess}(A)=\emptyset$ implies that  \[ 1\in\sigma\left(\re(z^{-1}A)\right)\setminus\sigma_{ess}\left(\re(z^{-1}A)\right)
\text{ for all } z\in\alpha. \] In other words, $1$ is an isolated eigenvalue of $\re(z^{-1}A)$ of finite multiplicity whenever $z\in\alpha$.

As in \cite{GauWu06}, we now invoke \cite[Theorem 3.3]{Nar} according to which the points $z\in\alpha$ possess the following property: there exists a neighborhood $U_z$ of such $z$ and two (possibly coinciding) open analytic arcs $\gamma_j(z)\ni z$, $j=1,2$
satisfying \eq{inc}  \partial W(A)\cap U_z\subset \gamma_1(z)\cup\gamma_2(z)\subset W(A). \en For $z\in\alpha'$ we have in addition that at least one of the arcs $\gamma_j(z)$ contains infinitely many points of
the unit circle and thus lie in $\mathbb T$. Say for definiteness, $\gamma_1(z)\subset\mathbb T$. Since ${\overline{\mathbb D}\supset}W(A)\supset\gamma_1(z)$,
in fact the whole arc $\gamma_1(z)$ is a subset of $\alpha$, implying that $z$ is an interior point of $\alpha'$. So, $\alpha'$ is not only closed but also open in $\gamma$, and thus $\alpha'=\gamma$. So, $\alpha=\gamma$ as well.
Inclusions \eqref{inc} imply in particular that $\alpha\subset W(A)$, thus completing the proof. \qed

\section{Additional observations}

{\bf 1.} As in \cite{GauWu06}, the results of Section~\ref{s:main} remain valid with $\mathbb D$ and $\mathbb T$  replaced by an arbitrary elliptical disk and its boundary, respectively.
In order to see that, it suffices to consider a suitable affine transformation $\alpha A+\beta A^*+\gamma I$ of $A$ in place of $A$ itself.

{\bf 2.} Recall that Theorem~\ref{th:cir} is a generalization of Theorem~\ref{th:GauWu} from the case of compact $A$ to $A$ being the sum of a normal and compact summands under the additional condition
$\sigma_{ess}(A)\cap{\mathbb T}=\emptyset$. The following examples show that merely the condition on $\sigma_{ess}(A)$ would not suffice.

{\sl Example 1.}  Consider the $2$-by-$2$ matrix $C=\begin{bmatrix} 1/2 & 1 \\ 0 & 1/2\end{bmatrix}$ for which $\sigma(C)=\{1/2\}$ and $W(C)$ is the closed disk $E$ centered at $1/2$ with the radius also equal $1/2$.
In particular, $1\in E\subset\overline{\mathbb D}$.

Let now $Z$ be a countable subset of $\mathbb T$, and $A=\displaystyle\bigoplus_{z\in Z} zC$. For any $\lambda\notin\frac{1}{2}\overline{Z}$ we then have
\[ (A-\lambda I)^{-1}=\bigoplus_{z\in Z} \begin{bmatrix} \left(\frac{z}{2}-\lambda\right)^{-1} & -z\left(\frac{z}{2}-\lambda\right)^{-2} \\ 0 & \left(\frac{z}{2}-\lambda\right)^{-1}\end{bmatrix}, \]
and so \[ \sigma_p(A)=\frac{1}{2}Z\subset \sigma(A)=\frac{1}{2}\overline{Z}\subset\frac{1}{2}\mathbb T, \]
implying that $\sigma(A)$ is disjoint with $\mathbb T$. At the same time \[ Z\subset W(A)=\conv\{ zE\colon z\in Z\}\varsubsetneq\overline{\mathbb D}. \]
Moreover, by choosing $Z$ located on a sufficiently small arc it is possible to arrange for a sector in $\mathbb D$ disjoint with $W(A)$ and having an opening arbitrarily close to $\pi$.

{\sl Example 2.} Let now $S$ be a weighted shift, that is, $Se_j=s_je_{j+1}$, where $\{e_j\}_{j=1}^\infty$ is an orhtonormal basis of  $\mathcal H$, and $\{s_j\}$ is a bounded sequence.
It is well known (and easy to see) that both the numerical range $W(S)$ and the spectrum $\sigma(S)$ are invariant under rotation, and depend only on the absolute values of $s_j$ and not their arguments.
So, without loss of generality let us suppose that $s_j\geq 0$. Being convex, $W(S)$ is then either an open or a closed circular disk, while $\sigma(S)$ is a (naturally, closed) circular disk according to
e.g. \cite[Problem 93]{Hal82}.

Suppose in addition that the sequence  $\{s_j\}$ is periodic, say with the period $r$. Then $W(S)$ is open \cite[Proposition 6]{St83}, while its radius (coinciding in this case with the
numerical radius $w(S)$ of the operator $S$) is given by
\[ w(S)=\max\{\sum_{j=1}^r s_jx_jx_{j+1}\colon x_j\in{\mathbb R}, \sum_{j=1}^r x_j^2=1, x_{r+1}=x_1\} \]
\cite[Theorem 1]{Ridge76}. In particular, $w(S)\geq (s_1+\cdots s_r)/r$.
On the other hand, the spectral radius $r(S)$ of $S$ is the geometric mean $\sqrt[r]{s_1\cdots s_r}$ of the weights $s_1,\ldots,s_r$ \cite[Corollary 2]{Ridge70}. So, $r(S)<w(S)$, unless all the weights $s_j$ are the same.

By an appropriate scaling, we may arrange for $w(S)=1$ and thus $W(S)={\mathbb D}\neq\overline{\mathbb D}$, in spite of $\sigma(S)\subset\mathbb D$ being disjoint with $\mathbb T$.


\begin{thebibliography}{10}

\bibitem{Ber70}
S.~K. Berberian.
\newblock The {W}eyl spectrum of an operator.
\newblock {\em Indiana Univ. Math. J.}, 20:529--544, 1970/1971.

\bibitem{GauWu03}
H.-L. Gau and P.~Y. Wu.
\newblock Condition for the numerical range to contain an elliptic disc.
\newblock {\em Linear Algebra Appl.}, 364:213--222, 2003.

\bibitem{GauWu06}
H.-L. Gau and P.~Y. Wu.
\newblock Anderson's theorem for compact operators.
\newblock {\em Proc. Amer. Math. Soc.}, 134(11):3159--3162, 2006.

\bibitem{GusRa}
K.~E. Gustafson and D.~K.~M. Rao.
\newblock {\em Numerical Range. {T}he Field of Values of Linear Operators and
  Matrices}.
\newblock Springer, New York, 1997.

\bibitem{Hal82}
P.~R. Halmos.
\newblock {\em A {H}ilbert space problem book}.
\newblock Springer-Verlag, New York, second edition, 1982.
\newblock Encyclopedia of Mathematics and its Applications, 17.

\bibitem{Nar}
F.~J. Narcowich.
\newblock Analytic properties of the boundary of the numerical range.
\newblock {\em Indiana Univ. Math. J.}, 29(1):67--77, 1980.

\bibitem{RadRad}
M.~Radjabalipour and H.~Radjavi.
\newblock On the geometry of numerical ranges.
\newblock {\em Pacific J. Math.}, 61(2):507--511, 1975.

\bibitem{Ridge70}
W.~C. Ridge.
\newblock Approximate point spectrum of a weighted shift.
\newblock {\em Trans. Amer. Math. Soc.}, 147:349--356, 1970.

\bibitem{Ridge76}
W.~C. Ridge.
\newblock Numerical range of a weighted shift with periodic weights.
\newblock {\em Proc. Amer. Math. Soc.}, 55:107--110, 1976.

\bibitem{St83}
Q.~F. Stout.
\newblock The numerical range of a weighted shift.
\newblock {\em Proc. Amer. Math. Soc.}, 88:495--502, 1983.

\end{thebibliography}
\end{document}